\long\def\killtext#1{}
\newtheorem{theorem}{Theorem}[section]
\newtheorem{definition}[theorem]{Definition}
\newtheorem{lemma}[theorem]{Lemma}
\newtheorem{corollary}[theorem]{Corollary}
\newtheorem{conjecture}[theorem]{Conjecture}
\newenvironment{proof}{\medskip\noindent{\bf Proof. }}{\hfill$\square$\medskip}
\begin{document}
\title{\bf Drawing cubic graphs with the four basic slopes}

\author{{\sc Padmini Mukkamala} and {\sc D\"om\"ot\"or P\'alv\"olgyi}\thanks{
The second author was supported by the European Union and co-financed by the European Social Fund (grant agreement no. TAMOP 4.2.1/B-09/1/KMR-2010-0003).
Part of this work was done in Lausanne and the authors gratefully acknowledge support
from the Bernoulli Center at EPFL and from the Swiss National Science
Foundation, Grant No. 200021-125287/1.}\\
Rutgers, the State University of New Jersey\\
E\"otv\"os University, Budapest 
}

\date{}

\maketitle 

\begin{abstract}
We show that every cubic graph can be drawn in the plane with straight-line edges using only the four basic slopes $\{0,\pi/4,\pi/2,3\pi/4\}$. We also prove that four slopes have this property if and only if we can draw $K_4$ with them.
\end{abstract}

\section{Introduction}
A drawing of a graph is said to be a {\em straight-line drawing} if the vertices of $G$ are represented by distinct points in the plane and every edge is represented by a straight-line segment connecting the corresponding pair of vertices and not passing through any other vertex of $G$.
If it leads to no confusion, in notation and terminology we make no distinction between a vertex and the corresponding point, and between an edge and the corresponding segment.
The {\em slope} of an edge in a straight-line drawing is the slope of the corresponding segment.
Wade and Chu \cite{wc94} defined the {\em slope number}, $sl(G)$, of a graph $G$ as the smallest number $s$ with the property that $G$ has a straight-line drawing with edges
of at most $s$ distinct slopes.

Obviously, if $G$ has a vertex of degree $d$, then its slope number is at least
$\lceil d/2\rceil$. Dujmovi\'c et al.~\cite{dsw04} asked if the slope number of a graph with bounded maximum degree $d$ could be arbitrarily large. Pach and P\'alv\"olgyi \cite{pp06} and Bar\'at, Matou\v sek, Wood \cite{bmw06} (independently) showed with a counting argument that the answer is no for $d\ge 5$.

In \cite{kppt08_2}, it was shown that cubic ($3$-regular) graphs could be drawn with five slopes. The major result from which this was concluded was that subcubic graphs\footnote{A graph is subcubic if it is a proper subgraph of a cubic graph, i.e. the degree of every vertex is at most three and it is not cubic (not $3$-regular).} can be drawn with the four basic slopes, the slopes $\{0,\pi/4,\pi/2, 3\pi/4\}$, corresponding to the vertical, horizontal and the two diagonal directions.

This was improved in \cite{ms} to show that connected cubic graphs can be drawn 
with four slopes\footnote{But not the four basic slopes.} while disconnected cubic graphs required five slopes.

It was shown by Max Engelstein \cite{eng} that $3$-connected cubic graphs with a Hamiltonian cycle can be drawn with the four basic slopes.

We improve all these results by the following

\begin{theorem}\label{mainthm}
Every cubic graph has a straight-line drawing with only the four basic slopes.
\end{theorem}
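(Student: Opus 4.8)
The plan is to induct on the number of vertices of the cubic graph $G$, using the fact from \cite{kppt08_2} that subcubic graphs admit drawings with the four basic slopes as the engine, and peeling off small ``reducible'' pieces---but carrying a strengthened inductive statement so that the drawings produced come with enough geometric slack to re-attach what was removed. Two easy reductions come first. Since the four basic slopes are translation invariant and the plane is unbounded, a disconnected cubic graph can be drawn one component at a time, so I may assume $G$ is connected. Next, a cut vertex $v$ of a cubic graph is always incident to a bridge (some component of $G-v$ is joined to $v$ by only one edge, and that edge is a bridge), so I may dispose of every graph containing a bridge $e=uv$ by cutting $e$ into two connected subcubic graphs, each with a single degree-$2$ vertex, drawing each of them with the degree-$2$ vertex exposed on the outer face and its half-edge aimed horizontally into empty space, and then placing the two drawings face to face and joining them by the segment $e$; this needs a version of the subcubic result with extra control over the degree-$2$ vertex, which I expect to obtain by reworking the argument of \cite{kppt08_2}. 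What remains is the essential case: $G$ is $2$-connected, equivalently bridgeless.

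For a bridgeless cubic graph I would hunt for a reducible configuration. The first is a triangle $T=\{x,y,z\}$: contracting $T$ to a single vertex $t$ produces a smaller cubic graph $G'$ (should this create a multi-edge, that instance is treated as its own small configuration), which I draw inductively and then locally ``blow up'' by replacing $t$ with a tiny triangle whose three sides use the basic slopes and whose three corners connect to the former neighbours of $x$, $y$, $z$ outside $T$. This blow-up is exactly where it matters that the slopes are the \emph{basic} four and not an arbitrary quadruple: fitting a small triangle together with three prescribed stubs using only these slopes is essentially the task of drawing $K_4$, which is the bridge to the second assertion of the abstract. If $G$ is triangle-free, then for any edge $uv$ the two other neighbours $a,b$ of $u$ and $c,d$ of $v$ are four distinct vertices, all different from $u$ and $v$, and neither $ab$ nor $cd$ is an edge of $G$ (otherwise a triangle appears), so deleting $u$ and $v$ and inserting the edges $ab$ and $cd$ yields a strictly smaller \emph{simple} cubic graph $G'$; from a drawing of $G'$ I would re-insert $u$ as an interior point of the segment $ab$ and $v$ as an interior point of the segment $cd$, and draw the edge $uv$ with one of the four slopes. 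The finitely many cubic graphs admitting no reduction of these kinds---$K_4$, $K_{3,3}$, the $3$-prism, the Petersen graph, and a handful of others---are handled by explicit drawings.

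The crux is that neither \cite{kppt08_2} as stated nor a naive inductive step tells us \emph{where} the relevant low-degree vertices and edges sit, whereas every reduction above needs such information: re-attaching a bridge needs the degree-$2$ vertex on the outer face pointing into free space; blowing up a triangle needs an empty disc around $t$ and slopes on its stubs compatible with a small triangle; and re-inserting $u$ and $v$ needs the segments $ab$ and $cd$ placed so that some basic-slope segment can join an interior point of one to an interior point of the other without passing through any vertex. So the real work is to isolate a single strengthened statement---drawings in which a prescribed edge (or vertex) lies on the outer boundary in a controlled position and direction, with a guaranteed margin of empty space around it---that is simultaneously strong enough to power all the reductions and weak enough to be verified on the base graphs. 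I expect this bookkeeping, and especially the geometric problem of routing the re-inserted edge $uv$ with only the two available diagonal directions while dodging all other vertices, to be the main obstacle.
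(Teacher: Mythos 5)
Your proposal is a plan rather than a proof: the two reductions that carry all the weight are exactly the ones you defer, and at least one of them fails as stated. Consider the triangle-free reduction, where you delete $u,v$, insert $ab$ and $cd$, and then want to re-insert $u$ on the interior of the segment $ab$, $v$ on the interior of $cd$, with $uv$ of basic slope. If in the inductive drawing of $G'$ the segments $ab$ and $cd$ are both horizontal and lie on the same horizontal line but are disjoint (say $ab$ from $(0,0)$ to $(1,0)$ and $cd$ from $(100,0)$ to $(101,0)$), then every choice of $u\in ab$, $v\in cd$ forces $uv$ to be horizontal and to pass through the vertices $b$ and $c$, which is forbidden; the vertical and the two diagonal slopes are unavailable since $u$ and $v$ have equal $y$-coordinates but distinct $x$-coordinates. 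So without control over \emph{where} $G'$ places $ab$ and $cd$ --- control that neither the induction hypothesis nor Theorem \ref{pachetal} provides --- the step is not just unverified but false in general. The triangle blow-up has the analogous problem (each corner of the small triangle is pinned to the line of its stub, and one must check all configurations of stub directions, including the degenerate contractions that produce multi-edges), and your claim that the irreducible graphs form a finite explicitly drawable list is asserted, not argued. You candidly flag these as ``the main obstacle,'' but they are the entire content of the theorem.

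For comparison, the paper avoids local surgery altogether. It finds a \emph{global} decomposition: a suitable $M$-cut splitting $G$ into two subcubic supercycles (Lemmas \ref{girth}--\ref{Mcut} produce one for $n\ge 18$ by a BFS/girth counting argument), and then exploits the full strength of Theorem \ref{pachetal} --- one may \emph{prescribe} the $x$-coordinates of all degree-two vertices, and no vertex lies to the North of a degree-two vertex. Choosing coordinates $x_1,\dots,x_m$ in one component and $-x_1,\dots,-x_m$ in the other, rotating the second by $\pi$ and stacking it high above the first makes every cut-edge vertical (Lemma \ref{cubicdr}). This is the missing ``strengthened statement'' you were looking for, except that it controls $x$-coordinates of the attachment vertices rather than outer-face position, and the matching structure of the cut is what makes vertical re-attachment always feasible. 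Small graphs ($n\le 16$) are then handled via bridges and $2$-cuts (which are themselves suitable $M$-cuts), Engelstein's result \cite{eng} on $3$-connected Hamiltonian cubic graphs, and explicit drawings of the Petersen and Tietze graphs. If you want to rescue your approach, you would need to build positional control into the inductive hypothesis from the start; the paper's choice of ``prescribed $x$-coordinates plus empty vertical rays'' is precisely such an invariant, and it is strong enough only because the cut-edges form a matching between the two pieces.
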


\begin{figure*}[htp]
{\centering
\subfigure[Petersen graph]{
\begin{tikzpicture}[scale=1]
\node [fill=black,circle,inner sep=1pt] (1) at (1,0) {}; 
\node [fill=black,circle,inner sep=1pt] (2) at (0,1) {}; 
\node [fill=black,circle,inner sep=1pt] (3) at (2,1) {}; 
\node [fill=black,circle,inner sep=1pt] (4) at (4,1) {}; 
\node [fill=black,circle,inner sep=1pt] (5) at (3,0) {}; 
\node [fill=black,circle,inner sep=1pt] (6) at (0,2) {}; 
\node [fill=black,circle,inner sep=1pt] (7) at (1,3) {}; 
\node [fill=black,circle,inner sep=1pt] (8) at (2,3) {}; 
\node [fill=black,circle,inner sep=1pt] (9) at (3,3) {}; 
\node [fill=black,circle,inner sep=1pt] (10) at (4,2) {}; 
\draw [black] (1) -- (2) -- (3) -- (4) -- (5) -- (1) -- (7) -- (8) -- (9) -- (10) -- (6) -- (7);
\draw [black] (2) -- (6);
\draw [black] (3) -- (8);
\draw [black] (4) -- (10);
\draw [black] (5) -- (9);
\end{tikzpicture}
}
\qquad
\subfigure[$K_{3,3}$]{
\begin{tikzpicture}[scale=1]
\node [fill=black,circle,inner sep=1pt] (1) at (1,0) {}; 
\node [fill=black,circle,inner sep=1pt] (2) at (3,0) {}; 
\node [fill=black,circle,inner sep=1pt] (3) at (4,1) {}; 
\node [fill=black,circle,inner sep=1pt] (4) at (3,2) {}; 
\node [fill=black,circle,inner sep=1pt] (5) at (1,2) {}; 
\node [fill=black,circle,inner sep=1pt] (6) at (0,1) {}; 
\draw [black] (1) -- (2) -- (3) -- (4) -- (5) -- (6) -- (1);
\draw [black] (1) -- (4);
\draw [black] (2) -- (5);
\draw [black] (3) -- (6);
\end{tikzpicture}
}
\caption{The Petersen graph and $K_{3,3}$ with the four basic slopes.}
} 
\label{fig:petersen}
\end{figure*}
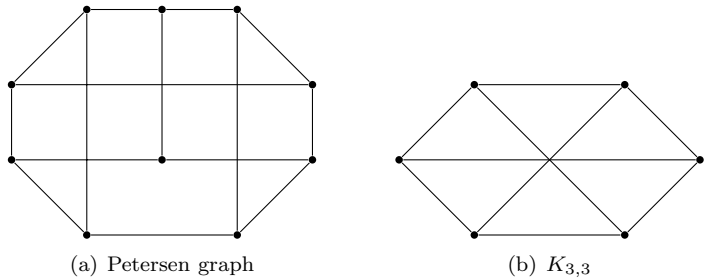

This is the first result about cubic graphs that uses a nice, fixed set of slopes instead of an unpredictable set, possibly containing slopes that are not rational multiples of $\pi$.
Also, since $K_4$ requires at least $4$ slopes, this settles the question of determining the minimum number of slopes required for cubic graphs.
In the last section we also prove

\begin{theorem}\label{karakterizacio}
Call a set of slopes {\em good} if every cubic graph has a straight-line drawing with them. 
Then the following statements are equivalent for a set $S$ of four slopes.
\begin{enumerate}
\item $S$ is good.
\item $S$ is an affine image of the four basic slopes.
\item We can draw $K_4$ with $S$.
\end{enumerate}
\end{theorem}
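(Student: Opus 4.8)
The plan is to establish the cycle of implications $(2)\Rightarrow(1)\Rightarrow(3)\Rightarrow(2)$. The implication $(2)\Rightarrow(1)$ is essentially free: an affine transformation of the plane carries a straight-line drawing to a straight-line drawing, sends distinct points to distinct points, preserves collinearity (so edges still avoid non-incident vertices), and maps the set of four basic slopes onto $S$; hence the drawing of an arbitrary cubic graph guaranteed by Theorem~\ref{mainthm} transports to a drawing with $S$. The implication $(1)\Rightarrow(3)$ is also immediate, since $K_4$ is itself a cubic graph, so if $S$ is good then in particular $K_4$ has a straight-line drawing with $S$.

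The substance is the implication $(3)\Rightarrow(2)$: if four slopes can be used to draw $K_4$, then they must be an affine image of $\{0,\pi/4,\pi/2,3\pi/4\}$. First I would observe that in any straight-line drawing of $K_4$ all six edges have distinct slopes is impossible with only four slopes, so some slopes repeat; more carefully, I would analyze the combinatorial type of a $4$-point straight-line drawing of $K_4$. There are essentially two cases: either the four points are in convex position (a convex quadrilateral with both diagonals drawn), or one point lies inside the triangle of the other three. In the convex-position case, the four sides use some slopes and the two diagonals use others; counting, the four sides must realize all four available slopes (opposite sides cannot be parallel in a drawing that uses only four slopes for six edges without forcing a degeneracy — this needs checking), and then one shows the quadrilateral is affinely a square with the diagonals along the two remaining directions, pinning down $S$ up to affine image. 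In the "point inside triangle" case, the three outer edges plus the three cevians use the four slopes, and a short incidence/parallelism argument again forces the configuration to be an affine image of the standard one.

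The cleanest route is probably to fix an affine normalization first: apply an affine map sending three of the four vertices to a convenient triangle (say the points realizing three of the slopes as $0$, $\pi/2$, and $\pi/4$), and then show the fourth vertex is forced to a unique position, so that the remaining edge-slopes are determined and coincide with the basic ones. The main obstacle I expect is the case analysis itself: ruling out the degenerate or "unexpected" drawings of $K_4$ with four slopes and verifying that every admissible combinatorial type is affinely rigid. Once the finitely many configurations are enumerated, each is a routine linear-algebra check that the four slopes occurring are an affine image of $\{0,\pi/4,\pi/2,3\pi/4\}$, completing the cycle.
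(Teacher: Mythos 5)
Your overall architecture --- the cycle $(2)\Rightarrow(1)\Rightarrow(3)\Rightarrow(2)$, with $(2)\Rightarrow(1)$ following from Theorem~\ref{mainthm} plus affine invariance and $(1)\Rightarrow(3)$ from $K_4$ being cubic --- matches the paper exactly. The gap is in $(3)\Rightarrow(2)$, which you only sketch, and the sketch contains a claim that is actually false: in the convex-position case you assert that ``the four sides must realize all four available slopes.'' The opposite is true. Each diagonal of the convex quadrilateral shares a vertex with every side, and since the four points are in general position (no vertex may lie in the interior of an edge of $K_4$, so no three vertices are collinear), incident edges cannot share a slope; the two diagonals cross, so they are not parallel to each other either. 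Hence the two diagonals occupy two slopes that no side can use, forcing the four sides into the remaining two slopes, i.e.\ opposite sides are parallel and the quadrilateral is a parallelogram. Following your stated claim instead would leave the diagonals with no admissible slope and derail the case.

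The paper avoids your case split entirely with a short pigeonhole argument you did not find: general position implies each slope class of edges is a matching, and a matching in $K_4$ has at most two edges, so six edges in four slope classes force at least two classes of size exactly two; these two perfect matchings union to a $4$-cycle with opposite sides parallel, hence a parallelogram (which incidentally rules out the ``point inside the triangle'' configuration for free, since a parallelogram is convex). An affine map taking the parallelogram to a square then carries $S$ to the four basic slopes. To complete your proposal you would need to correct the convex case as above and actually carry out the ``short incidence/parallelism argument'' for the interior-point case rather than gesture at it; adopting the matching/pigeonhole argument does both at once.
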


The problem whether the slope number of graphs with maximum degree four is unbounded or not remains an interesting open problem.


There are many other related graph parameters. 
The {\em thickness} of a graph $G$ is defined as the smallest number of planar subgraphs it can be decomposed into \cite{MuOS}. It is one of the several widely known graph parameters that measures how far $G$ is from being planar.
The {\em geometric thickness} of $G$, defined as the smallest number of {\em crossing-free} subgraphs of a straight-line drawing of $G$ whose union is $G$, is another similar notion \cite{Ka}.
It follows directly from the definitions that the thickness of any graph is at most as
large as its geometric thickness, which, in turn, cannot exceed its slope number.
For many interesting results about these parameters, consult \cite{DiEH, dek04, dsw04, DuW, E04, HuSW}.

A variation of the problem arises if (a) two vertices in a drawing have an edge between them if and only if the slope between them belongs to a certain set $S$ and, (b) collinearity of points is allowed. This violates the condition stated before that an edge cannot pass through vertices other than its end points. For instance, $K_n$ can be drawn with one slope. The smallest number of slopes that can be used to represent a graph in such a way is called the {\em slope parameter} of the graph.
Under these set of conditions, \cite{ambaha06} proves that the slope parameter of subcubic outerplanar graphs is at most $3$.
It was shown in \cite{kppt10} that the slope parameter of every cubic graph is at most seven.
If only the four basic slopes are used, then the graphs drawn with the above conditions are called queens graphs and \cite{amba06} characterizes certain graphs as queens graphs. Graph theoretic properties of some specific queens graphs can be found in \cite{bs09}.

Another variation for planar graphs is to demand a planar drawing. The {\em planar slope number} of a planar graph is the smallest number of distinct slopes with the property that the graph has a straight-line drawing with non-crossing edges using only these slopes.
Dujmo\-vi\'c, Eppstein, Suderman, and Wood \cite{dsw07} raised the question whether there exists a function $f$ with the property that the planar slope number of every planar graph with maximum degree $d$ can be bounded from above by $f(d)$.
Jelinek et al.~\cite{JJ10} have shown that the answer is yes for {\em outerplanar} graphs, that is, for planar graphs that can be drawn so that all of their vertices lie on the outer face.
Eventually the question was answered in \cite{kpp10} where it was proved that any bounded degree planar graph has a bounded planar slope number.

Finally we would mention a slightly related problem. Didimo et al.~\cite{Didimo} studied drawings of graphs where edges can only cross each other in a right angle. Such a drawing is called an RAC (right angle crossing) drawing. They showed that every graph has an RAC drawing if every edge is a polygonal line with at most three bends (i.e. it consists of at most four segments). They also gave upper bounds for the maximum number of edges if less bends are allowed. Later Arikushi et al.~\cite{rado} showed that such graphs can have at most $O(n)$ edges. Angelini et al.~\cite{Angelini} proved that every cubic graph admits an RAC drawing with at most one bend. It remained an open problem whether every cubic graph has an RAC drawing with straight-line segments. If besides orthogonal crossings, we also allow two edges to cross at $45^\circ$, then it is a straightforward corollary of Theorem \ref{mainthm} that every cubic graph admits such a drawing with straight-line segments.

In section $2$ we give the proof of the Theorem \ref{mainthm} while in section $3$ we prove Theorem \ref{karakterizacio} and discuss open problems.

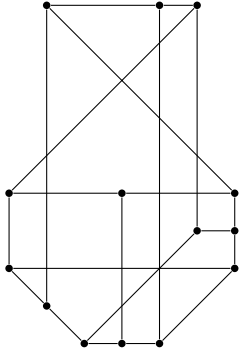
\begin{figure*}[htp]
{\centering
\begin{tikzpicture}[scale=0.5]
\node [fill=black,circle,inner sep=1pt] (1) at (0,2) {}; 
\node [fill=black,circle,inner sep=1pt] (2) at (1,1) {}; 
\node [fill=black,circle,inner sep=1pt] (3) at (2,0) {}; 
\node [fill=black,circle,inner sep=1pt] (4) at (3,0) {}; 
\node [fill=black,circle,inner sep=1pt] (5) at (4,0) {}; 
\node [fill=black,circle,inner sep=1pt] (6) at (6,2) {}; 
\node [fill=black,circle,inner sep=1pt] (7) at (6,3) {}; 
\node [fill=black,circle,inner sep=1pt] (8) at (5,3) {}; 
\node [fill=black,circle,inner sep=1pt] (9) at (0,4) {}; 
\node [fill=black,circle,inner sep=1pt] (10) at (3,4) {}; 
\node [fill=black,circle,inner sep=1pt] (11) at (6,4) {}; 
\node [fill=black,circle,inner sep=1pt] (12) at (1,9) {}; 
\node [fill=black,circle,inner sep=1pt] (13) at (4,9) {}; 
\node [fill=black,circle,inner sep=1pt] (14) at (5,9) {}; 
\draw [black] (1) -- (2) -- (3) -- (4) -- (5) -- (6) -- (1);
\draw [black] (3) -- (8) -- (7) -- (6);
\draw [black] (9) -- (10) -- (11) -- (12) -- (13) -- (14) -- (9);
\draw [black] (1) -- (9);
\draw [black] (2) -- (12);
\draw [black] (4) -- (10);
\draw [black] (5) -- (13);
\draw [black] (7) -- (11);
\draw [black] (8) -- (14);
\end{tikzpicture}
\caption{The Heawood graph drawn with the four basic slopes.}
} 
\label{fig:heawood}
\end{figure*}

\section{Proof of Theorem \ref{mainthm}}

We start with some definitions we will use throughout the section.

\subsection{Definitions and Subcubic Theorem}
\indent \vspace{-0.3cm}

Throughout the paper $\log$ always denotes $\log_2$, the logarithm in base $2$.

We recall that the girth of a graph is the length of its shortest cycle.

\begin{definition}
Define a {\em  supercycle} as a connected graph where every degree is at least two and not all are two. Note that a minimal supercycle will look like a ``$\theta$'' or like a ``dumbbell''. 
\end{definition}

We recall that a {\em cut} is a partition of the vertices into two sets. We say that an edge is in the cut if its ends are in different subsets of the partition. We also call the edges in the cut the {\em cut-edges}. The {\em size} of a cut is the number of cut-edges in it.

\begin{definition}
We say that a cut is an {\em $M$-cut} if the cut-edges form a matching, in other words, if their ends are pairwise different vertices.
We also say that an {\em $M$-cut} is suitable if after deleting the cut-edges, the graph has two components, both of which are supercycles.
\end{definition}

For any two points $p_1=(x_1,y_1)$ and $p_2=(x_2,y_2)$, we
say that $p_2$ is {\em to the North} of $p_1$ if
$x_2=x_1$ and $y_2>y_1$ . Analogously, we say that
$p_2$ is {\em to the Northwest} of $p_1$ if $x_2+y_2=x_1+y_1$ and $y_2>y_1$.

We will give the exact statement of the theorem of \cite{kppt08_2} about subcubic graphs here as it will be used in this proof.

\begin{theorem}[\cite{kppt08_2}]\label{pachetal}
Let $G$ be a connected graph that is not a cycle and whose every vertex has degree at most three. Suppose that $G$ has at least one vertex of degree at most two and denote by $v_1,\ldots,v_m$ the vertices of degree at most two ($m \ge 1$). 

Then, for any sequence $x_1,\ldots,x_m$ of real numbers, linearly independent over the rationals, $G$ has a straight-line drawing with the following properties:

(1) Vertex $v_i$ is mapped into a  point with $x$-coordinate $x(v_i) = x_i \ \ \ (1 \le i \le m)$

(2) The slope of every edge is $0, \pi/2, \pi/4,$ or $-\pi/4$

(3) No vertex is to the North of any vertex of degree two.

(4) No vertex is to the North or to the Northwest of any vertex of degree one.

\end{theorem}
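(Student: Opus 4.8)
The plan is to prove the statement by induction on the number of edges of $G$, letting the genericity hypothesis do most of the hygienic work for free. Because $x_1,\dots,x_m$ are linearly independent over $\mathbb{Q}$, every vertex we ever place will have coordinates that are integer combinations of the abscissae in play (the four slopes only ever add or subtract horizontal displacements), so as long as we only introduce new $x$-coordinates that remain rationally independent of all the others, two vertices can never coincide and no edge can pass through a third vertex. I would therefore carry the invariant ``the abscissae in play form a rationally independent set'' and reduce all non-degeneracy to that single bookkeeping fact. The genuine content is to maintain conditions (2), (3) and (4) as the graph is shrunk and then re-expanded; the base cases (a single edge, and the minimal non-cycles such as a small ``$\theta$'' or ``dumbbell'') are placed by hand.

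\textbf{Leaf removal.} If $G$ has a vertex $v$ of degree one with neighbour $u$, set $G'=G-v$, which is again connected and of maximum degree at most three; after disposing of the small exceptions (for instance $G'$ a cycle) separately, I draw $G'$ by induction. When $u$ had degree three in $G$ its position was unconstrained, so I give it a fresh rationally independent abscissa; when $u$ already had degree $\le 2$ I keep $x(u)=x_u$. In $G'$ the degree of $u$ has dropped, so by (3)/(4) it owns an empty ray to the North, and to the Northwest if its degree is now at most one. I attach $v$ by routing the edge $uv$ along an allowed direction other than straight North (to keep $x_v\ne x_u$) — a diagonal or horizontal one, chosen so that $v$ lands exactly at its prescribed abscissa $x_v$ and high above the remaining picture. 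One free coordinate $y_v$ against one slope constraint makes this a determined, solvable placement; it then remains to check that $v$ itself satisfies (4) and that inserting it spoils no other vertex's North/Northwest emptiness, which is exactly where genericity and the ``place it high'' choice are used.

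\textbf{A degree-two cut vertex.} If $G$ has no leaf, choose a vertex $v$ of degree two with neighbours $u,w$. When $v$ is a cut vertex, $G-v$ splits into two pieces $A\ni u$ and $B\ni w$, which I draw separately by induction and then rejoin through $v$. The point is a parameter count: I may translate the whole drawing of $B$ \emph{vertically} without disturbing any prescribed abscissa, so I have two free reals — the height $y_v$ and the shift of $B$ — against exactly two slope constraints, one for $uv$ and one for $wv$. Fixing admissible directions turns these into a solvable linear system, and pushing $B$ far upward makes crossing-avoidance and the preservation of (3)/(4) automatic.

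\textbf{The main obstacle.} This clean count fails precisely when $v$ lies on a cycle, i.e.\ is not a cut vertex: then $G-v$ stays connected, the positions of $u$ and $w$ are both frozen by the recursion, and reinserting $v$ at its prescribed abscissa imposes two slope constraints on the single free coordinate $y_v$ — an over-determined, generically unsolvable placement. Resolving this is the heart of the argument. Rather than deleting one such vertex, I would delete a whole maximal path of degree-two vertices (an ``ear'') down to its degree-three endpoints and redraw that ear on top of the recursively drawn remainder, so that its interior vertices acquire enough vertical freedom to hit their prescribed abscissae while every edge keeps one of the four slopes. Carrying this out demands a careful case analysis of whether deleting the ear leaves the graph connected, non-cyclic, and still equipped with a vertex of degree at most two, and — most delicately — a verification that the reinserted ear violates neither its own vertices' North/Northwest conditions nor those of the vertices beneath it. Conditions (3) and (4) are exactly the hooks that make every reinsertion possible, but they are global emptiness statements, so the bulk of the work is the geometric bookkeeping that keeps them true at each step.
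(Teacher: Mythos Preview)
The paper does not prove this theorem. Theorem~\ref{pachetal} is quoted verbatim from \cite{kppt08_2} and used as a black box; the only content the present paper adds is the paragraph immediately following the statement, which notes that the original induction in \cite{kppt08_2} was sloppy about disconnected intermediate graphs and that the fix is to strengthen the inductive invariant so that \emph{all} $x$-coordinates in the drawing are rational combinations of a rationally independent set $x_1,\dots,x_n$ extending the prescribed $x_1,\dots,x_m$. You spotted exactly this fix on your own (your ``abscissae in play form a rationally independent set'' invariant), which is good.

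As for your outline itself: the scaffolding---induction on edges, leaf removal, splitting at a degree-two cut vertex, and ear surgery for the remaining case---is the right shape, and it is essentially how the proof in \cite{kppt08_2} is organised. But your write-up is a plan, not a proof, and you say so yourself: the ear-reinsertion case is where all the difficulty lives, and you stop at ``demands a careful case analysis'' and ``the bulk of the work is the geometric bookkeeping.'' That bookkeeping is genuinely nontrivial---one has to control which of the four directions is used at each joint of the ear so that every interior vertex lands on its prescribed abscissa \emph{and} keeps its North (or North/Northwest) ray clean, while the two endpoints of the ear, whose degrees rise back to three, must not block anyone else's rays either. None of that is carried out here. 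Even in your leaf case there is a small slip: once $u$ is placed and $x_v$ is prescribed, you do not have a free $y_v$---you have at most three candidate positions for $v$ (one per non-vertical slope), and you must argue that one of them is high enough and on the correct side to satisfy (4) without spoiling (3)/(4) for the vertices already drawn.

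So: there is nothing here to compare against, because the paper offers no proof; and what you have is a correct strategic outline with the hard case explicitly left open.
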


It seems that the proof of the theorem about subcubic graphs in \cite{kppt08_2} was slightly incorrect. It used induction but during the proof the statement was also used for disconnected graphs.
This can be a problem, as when drawing two components, it might happen that a degree three vertex of one component has to be above a degree two vertex of the other component.
However, the proof can be easily fixed to hold for disconnected graphs as well.
For this, one can make the statement stronger, by saying that also for every graph one can select any sequence $x_{m+1},\ldots,x_n$ of real numbers that satisfy that $x_1,\ldots,x_m, x_{m+1},\ldots,x_n$ are linearly independent over the rationals, such that the $x$-coordinates of all the vertices are a linear combination with rational coefficients of $x_1,\ldots,x_n$.
This way we can ensure that different components do not interfere.\\

Note that Theorem \ref{pachetal} proves the result of Theorem \ref{mainthm} for subcubic graphs. 
Another minor observation is that we may assume that the graph is connected. 
Since we use the basic four slopes, if we can draw the components of a 
disconnected graph, then we just place them far apart in the plane so 
that no two drawings intersect. 
So we will assume for the rest of the section that the graph is cubic and connected.

\subsection{Preliminaries}
The results in this subsection are also interesting independent of the current problem we deal with.

\begin{lemma}\label{girth}
Every connected cubic graph on $n$ vertices contains a cycle of length at most 
$2 \lceil \log ( \frac{n}{3} +1) \rceil $.
\end{lemma}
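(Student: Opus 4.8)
\medskip\noindent\textbf{Proof plan.}
The plan is to run a breadth-first search (BFS) from an arbitrary vertex and to exploit that, because the graph is cubic, the sizes of the successive BFS levels double as long as we stay strictly below the ``girth threshold''. This forces $n$ to grow exponentially with the girth $g$, and inverting the resulting inequality yields the claimed logarithmic upper bound on $g$.

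In detail, a cubic graph on $n$ vertices has $3n/2>n-1$ edges and hence contains a cycle, so its girth $g$ is well defined. Put $r=\lceil g/2\rceil-1\ (\ge 1)$, fix a vertex $v$, and let $L_i$ be the set of vertices at distance exactly $i$ from $v$. The first and only nontrivial step is to establish the ``tree-like'' structure of $L_0,\dots,L_r$: (i) for $2\le j\le r$, every vertex of $L_j$ has exactly one neighbour in $L_{j-1}$; and (ii) for $1\le j\le r-1$, no two vertices of $L_j$ are adjacent. Indeed, a failure of (i) or (ii) would yield, together with shortest paths back to $v$, a closed walk of length at most $2r$, and hence a cycle of length at most $2r$; but $2r\le g-1$ (one has $2r=g-2$ for even $g$ and $2r=g-1$ for odd $g$), contradicting the girth being $g$. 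Granting (i) and (ii): since $v$ has three distinct neighbours, $|L_0|=1$ and $|L_1|=3$; and for $1\le i\le r-1$ each vertex of $L_i$ has its three edges split as one to $L_{i-1}$, none inside $L_i$, two to $L_{i+1}$, while by (i) applied with $j=i+1$ these pairs partition $L_{i+1}$, so $|L_{i+1}|=2|L_i|$. Therefore $|L_i|=3\cdot 2^{i-1}$ for $1\le i\le r$ and
\[
 n\ \ge\ \sum_{i=0}^{r}|L_i|\ =\ 1+3\,(2^{r}-1)\ =\ 3\cdot 2^{r}-2.
\]
Finally, $3\cdot 2^{r}-2\le n$ gives $2^{r}\le (n+2)/3<n/3+1$, so $r<\log(n/3+1)$; as $r$ is an integer, $r+1\le\lceil\log(n/3+1)\rceil$, that is $\lceil g/2\rceil\le\lceil\log(n/3+1)\rceil$, and hence $g\le 2\lceil g/2\rceil\le 2\lceil\log(n/3+1)\rceil$.

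The main obstacle is entirely the bookkeeping in the tree-like step: for each forbidden configuration (an edge inside a level, or a non-tree edge between consecutive levels, at each admissible index) one must pin down precisely which cycle length it creates and check it is smaller than $g$, keeping the even and odd cases apart so that the doubling remains valid all the way up to level $r$; once this is in place the rest is routine arithmetic. It may be worth adding that the bound is tight, as witnessed by $K_{3,3}$ ($n=6$, girth $4$) and the Heawood graph ($n=14$, girth $6$).
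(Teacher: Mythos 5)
Your proof is correct and follows essentially the same route as the paper: a breadth-first search from an arbitrary vertex, using $3$-regularity to show the levels double (so at most $3\cdot 2^{k-1}-2$ vertices are reachable within $k$ levels) and concluding that a cycle must close up within $\lceil\log(n/3+1)\rceil$ levels. Your contrapositive formulation via the girth and the explicit verification of the tree-like structure of the levels is a more careful write-up of the same counting argument.
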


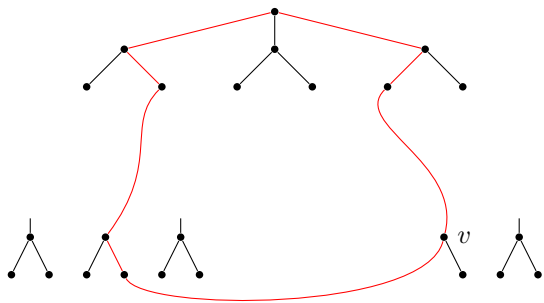
\begin{figure*}[h]
{\centering
\begin{tikzpicture}[scale=0.5]
\node [fill=black,circle,inner sep=1pt] (1) at (7,7) {}; 
\node [fill=black,circle,inner sep=1pt] (2) at (3,6) {}; 
\node [fill=black,circle,inner sep=1pt] (3) at (7,6) {}; 
\node [fill=black,circle,inner sep=1pt] (4) at (11,6) {}; 
\node [fill=black,circle,inner sep=1pt] (5) at (2,5) {}; 
\node [fill=black,circle,inner sep=1pt] (6) at (4,5) {}; 
\node [fill=black,circle,inner sep=1pt] (7) at (6,5) {}; 
\node [fill=black,circle,inner sep=1pt] (8) at (8,5) {}; 
\node [fill=black,circle,inner sep=1pt] (9) at (10,5) {}; 
\node [fill=black,circle,inner sep=1pt] (10) at (12,5) {}; 
\node [fill=black,circle,inner sep=1pt] (11) at (0,0) {}; 
\node [fill=black,circle,inner sep=1pt] (12) at (0.5,1) {}; 
\node [fill=black,circle,inner sep=1pt] (13) at (1,0) {}; 
\node [fill=black,circle,inner sep=1pt] (14) at (2,0) {}; 
\node [fill=black,circle,inner sep=1pt] (15) at (2.5,1) {}; 
\node [fill=black,circle,inner sep=1pt] (16) at (3,0) {}; 
\node [fill=black,circle,inner sep=1pt] (17) at (4,0) {}; 
\node [fill=black,circle,inner sep=1pt] (18) at (4.5,1) {}; 
\node [fill=black,circle,inner sep=1pt] (19) at (5,0) {}; 
\node (20) at (11,-1) {}; 
\node (26) at (3.5,-1) {}; 
\node [fill=black,circle,inner sep=1pt,label=right:$v$] (21) at (11.5,1) {}; 
\node [fill=black,circle,inner sep=1pt] (22) at (12,0) {}; 
\node [fill=black,circle,inner sep=1pt] (23) at (13,0) {}; 
\node [fill=black,circle,inner sep=1pt] (24) at (13.5,1) {}; 
\node [fill=black,circle,inner sep=1pt] (25) at (14,0) {}; 
\draw [black] (2) -- (5);
\draw [black] (4) -- (10);
\draw [red] (2) -- (1) -- (4);
\draw [black] (1) -- (3) -- (7);
\draw [red] (2) -- (6);
\draw [black] (8) -- (3);
\draw [red] (9) -- (4);

\draw [black] (11) -- (12) -- (13);
\draw [black] (12) -- (0.5,1.5);
\draw [black] (18) -- (4.5,1.5);
\draw [black] (24) -- (13.5,1.5);
\draw [black] (14) -- (15);
\draw [red] (16) -- (15);
\draw [black] (17) -- (18) -- (19);
\draw [black] (23) -- (24) -- (25);
\draw [black] (21) -- (22);
\draw [red] (21) .. controls (20) and (26) .. (16);
\draw [red] (6) .. controls (3,4) and (4,3) .. (15);
\draw [red] (9) .. controls (9,4) and (12,3) .. (21);

\end{tikzpicture}
\caption{Finding a cycle in the BFS tree using that the left child of $v$ already occurred. 
}
} 
\label{fig:cycle}
\end{figure*}

\begin{proof}
Start at any vertex of $G$ and conduct a breadth first search (BFS) of 
$G$ until a vertex repeats in the BFS tree. 
We note here that by iterations we will (for the rest of the subsection)
mean the number of levels of the BFS tree.
Since $G$ is cubic, after $k$ iterations, the number of vertices visited
will be $1 + 3+ 6+ 12 + \ldots +3 \cdot 2^{k-2} = 1+3(2^{k-1}-1)$. 
And since $G$ has $n$ vertices, some vertex must repeat after 
$k = \lceil \log (\frac{n}{3} + 1) \rceil +1$ iterations.
Tracing back along the two paths obtained for the vertex that reoccurs,
we find a cycle of length at most
$2 \lceil \log ( \frac{n}{3} +1) \rceil $.
\end{proof}

\begin{lemma}\label{supercycle}
Every connected cubic graph on $n$ vertices with girth $g$ contains a supercycle with at most $2 \lceil \log (\frac{n-1}{g}) \rceil +g-1$ vertices.
\end{lemma}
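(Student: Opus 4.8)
The plan is to grow the supercycle out of a shortest cycle, using a breadth-first search in the spirit of the proof of Lemma~\ref{girth}. Let $C$ be a cycle of $G$ with $|V(C)|=g$, and let $G'$ be obtained from $G$ by contracting $C$ to a single new vertex $c$ and discarding the loops this creates. A chord of $C$ would produce a cycle shorter than $g$, so no edge of $G$ leaving $C$ has both ends on $C$; hence in $G'$ the vertex $c$ has degree $g$, every other vertex still has degree $3$, and $G'$ is connected with minimum degree at least $3$ (using $g\ge 3$). First I would dispose of a degenerate case: if two edges of $G$ leaving $C$ share an endpoint $w\notin C$ --- equivalently, the contraction creates a multiple edge at $c$ --- then $C$ together with the length-two path through $w$ is already a $\theta$-supercycle on $g+1$ vertices, well within the claimed bound; so from now on the $g$ neighbours of $c$ in $G'$ may be assumed distinct.

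Next, run a BFS in $G'$ from $c$ and look for the first \emph{non-tree edge}, i.e.\ the first edge met joining two already-discovered vertices. As long as no non-tree edge has appeared, the BFS tree branches exactly: level $1$ has $g$ vertices, and since every vertex other than $c$ has two edges besides the one to its parent, each of which must run to a fresh vertex and no two of which coincide, every subsequent level is exactly twice the previous one. Hence, if the first non-tree edge is found only while processing level $k$, then levels $0,\dots,k-1$ already occupy $1+g(2^{k-1}-1)$ vertices of $G'$; since $G'$ has only $n-g+1$ vertices and, having minimum degree at least $3$, is not a tree, there must be such an edge, and comparing these two quantities forces $g\cdot 2^{k-1}\le n-1$, so $k$ is at most about $\log\frac{n-1}{g}$. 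This is the estimate behind the $\log\frac{n-1}{g}$ in the statement; the few small graphs for which the exponential growth has not yet caught up (those with $n$ only slightly larger than $g$) would be handled by inspection.

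The final step is to lift the short cycle back to $G$. The first non-tree edge $\{a,b\}$, together with the tree-paths from $a$ and $b$ up to their lowest common ancestor, forms a cycle $C'$ in $G'$ of length at most about $2k$. If $C'$ avoids $c$, then $C'$ is already a cycle of $G$ disjoint from $C$, and the BFS tree-path from $c$ down to $C'$ lifts to a path of $G$ joining $C$ to $C'$, so $C$, this path and $C'$ form a dumbbell. If $C'$ runs through $c$, then deleting $c$ and reattaching its two incident edges of $C'$ to their endpoints on $C$ turns $C'$ into either a path between two distinct vertices of $C$ (giving a $\theta$ with $C$) or a closed walk meeting $C$ in a single vertex (a ``handle''). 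In every case one gets a supercycle, and adding $|V(C)|=g$ to the number of additional vertices contributed by the lifted structure gives a bound of the claimed shape $2\lceil\log\frac{n-1}{g}\rceil+g-1$. I expect the real work --- and the main obstacle --- to be exactly this bookkeeping: organising the several lifting cases, and choosing carefully where the BFS cycle is attached to $C$, so that the additive term comes out as $g-1$ rather than $g$ or $g+1$ and the ``$n-1$'' is the right quantity; the asymptotic behaviour of the construction is not in question.
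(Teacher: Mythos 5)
Your proposal follows essentially the same route as the paper: contract the girth cycle to a single vertex $c$, run a BFS from $c$ in the contracted (multi)graph, use the growth estimate $1+g(2^{k-1}-1)$ to find a repeated vertex and hence a short cycle, and lift back to a supercycle in $G$. Your treatment is in fact more explicit than the paper's about the degenerate multiple-edge case and the lifting case analysis (the paper simply asserts ``this implies''), and your worry about the exact constants is fair --- the paper's own proof is loose on precisely those points (it switches between $\frac{n+1}{g}$ and $\frac{n-1}{g}$ without comment) --- but the approach and the substance of the argument coincide.
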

\begin{proof}
Contract the vertices of a length $g$ cycle, obtaining a multigraph $G'$ with $n-g+1$ vertices, that is almost $3$-regular, except for one vertex of degree $g$, from which we start a BFS. It is easy to see that the number of vertices visited after $k$ iterations is at most $1 + g + 2g + 4g + \ldots + g\cdot 2^{k-2} = g(2^{k-1} -1)+1$. 
And since $G'$ has $n-g+1$ vertices, some vertex must repeat after 
$k = \lceil \log (\frac{n-g+1}{g} + 1) \rceil +1=\lceil \log (\frac{n+1}{g}) \rceil +1$ iterations.
Tracing back along the two paths obtained for the vertex that reoccurs,
we find a cycle (or two vertices connected by two edges) of length at most $2 \lceil \log (\frac{n-1}{g}) \rceil$ in $G'$.
This implies that in $G$ we have a supercycle with at most $2 \lceil \log (\frac{n-1}{g}) \rceil +g-1$ vertices.
\end{proof}

\begin{lemma}\label{Mcut}
Every connected cubic graph on $n>2s -2$ vertices with a supercycle with $s$ vertices contains a suitable $M$-cut of size at most $s -2$.
\end{lemma}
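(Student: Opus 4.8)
\medskip\noindent\textbf{Proof proposal.}
The plan is to build one side of the cut by starting from the $s$ vertices of the given supercycle $H$ and then repeatedly ``absorbing'' vertices or pieces of the rest of the graph, all the while controlling the cut size and the sizes of the two sides. Two preliminary observations get things going. Since $H\subseteq G$ with $G$ cubic, every degree in $H$ is $2$ or $3$; the number of odd-degree vertices of $H$ being even and $H$ not being $2$-regular, $H$ has at least two vertices of degree $3$, hence at most $s-2$ of degree $2$, so the cut $(V(H),V(G)\setminus V(H))$ has size $c_0\le s-2$ and is a matching on the $V(H)$-side (each $v\in V(H)$ has at least $\deg_H(v)\ge 2$ neighbours inside $V(H)$, hence at most one outside). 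And since $n>2s-2$, the complement has $n-s\ge s-1>c_0$ vertices. So if we set $A:=V(H)$, $B:=V(G)\setminus V(H)$, $c:=e(A,B)$, then at the start $|A|=s>c$, $|B|>c$, $G[A]$ is connected, and every vertex of $A$ has at most one neighbour in $B$.

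I would then run three phases, preserving throughout that $G[A]$ is connected, that each vertex of $A$ has at most one neighbour in $B$, that $c$ never increases, and --- the key invariant --- that $|B|-c$ never decreases, so it stays at least its initial value $n-s-c_0\ge n-2s+2\ge 1$ (this is exactly where $n>2s-2$ is used). Phase 1: while some $w\in B$ has two or more neighbours in $A$, move $w$ into $A$; one checks that $c$ drops by $1$ or $3$ while $|B|$ drops by $1$, so $|B|-c$ does not decrease, and when Phase 1 ends the cut is a matching on both sides. Phase 2: while some component of $G[B]$ is a cycle, move that whole component into $A$ (it has an edge into the nonempty $A$, so $G[A]$ stays connected); such a component $C$ carries exactly $|C|$ cut-edges because the cut is a matching, so $c$ and $|B|$ both drop by $|C|$ and $|B|-c$ is unchanged. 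If Phase 2 ever emptied $B$, its last step would remove a single cycle component equal to all of the current $B$, forcing $|B|=c$ and contradicting $|B|-c\ge 1$; hence after Phase 2 the graph $G[B]$ is nonempty and each of its components is a supercycle. Phase 3: if $G[B]$ has several components, move all but one of them into $A$; they send no edges to the one we keep, so this only deletes cut-edges, and $B$ becomes a single supercycle $B_1$. A supercycle has at least two degree-$3$ vertices, so $|B_1|\ge e(B_1,A)+2$, i.e. the kept side still exceeds the final cut size.

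It remains to read off the conclusion. The cut is a matching of size $c\le c_0\le s-2$; $G[A]$ and $G[B]$ are connected, so deleting the cut-edges leaves exactly these two components; $G[B]$ is a supercycle by construction, and $G[A]$ is one as well, being connected, of minimum degree at least $2$ (a matching cut removes at most one edge per vertex), and having a degree-$3$ vertex since $|A|\ge s>s-2\ge c$ forces a vertex incident to no cut-edge. Thus $(A,B)$ is a suitable $M$-cut of size at most $s-2$. I expect the main difficulty to be precisely what Phases 2 and 3 address: $G\setminus V(H)$ need be neither connected nor free of pure cycles, and one has to guarantee that the absorption does not devour the entire complement --- this is exactly what the invariant $|B|-c\ge 1$ prevents, and that invariant is where the hypothesis $n>2s-2$ is needed.
\hfill$\square$\medskip
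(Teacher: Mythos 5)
Your proposal is correct and follows essentially the same route as the paper: start from the cut around the supercycle (size at most $s-2$), absorb vertices of the complement with two or more neighbours in $A$ to make the cut a matching, maintain the invariant that the complement has strictly more vertices than there are cut-edges (this is exactly where $n>2s-2$ enters, in both arguments), and finally keep one non-cycle component of the complement while merging the rest into $A$. Your explicit three-phase organization and the invariant $|B|-c\ge 1$ are just a cleaner packaging of the paper's counting argument; the paper's ``component with more vertices than incident cut-edges'' is the same object as your ``supercycle component.''
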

\begin{proof}
The supercycle with $s$ vertices, $A$, has at least two vertices of degree $3$.
The size of the $(A, G-A)$ cut is thus at most $s -2$. 
This cut need not be an $M$-cut because the edges may have a common neighbor
in $G-A$. To repair this, we will now add, iteratively, 
the common neighbors of edges in the cut to $A$, 
until no edges have a common neighbor in $G-A$.
Note that in any iteration, if a vertex, $v$, adjacent to exactly two cut-edges
was chosen, then the size of $A$ increases by $1$ and the size of the cut decreases
by $1$ (since, these two cut-edges will get added to $A$ along with $v$,  but
 since the graph is cubic, the third edge from $v$ will become a part of the
cut-edges).
If a vertex adjacent to three cut-edges was chosen, then the size of $A$ increases
by $1$ while the number of cut-edges decreases by $3$.
From this we can see that the maximum number of vertices that could
have been added to $A$ during this process is $s -3$. 
Now there are three conditions to check.

The first condition is that this process returns
a non-empty second component. This would occur if 
$$ (n- s) - (s-3)>0$$
or,
$$ n > 2s -3.$$

The second condition is that the second component should not be a collection of disjoint cycles.
For this we note that it is enough to check that at every stage, the 
number of cut-edges is strictly smaller than the number of vertices
in $G-A$. But since in the above iterations, the number of cut-edges decreases
by a number greater than or equal to the decrease in the size of $G-A$, it is enough to 
check that before the iterations, the number of cut-edges is strictly 
smaller than the number of vertices in $G-A$.
This is the condition
$$n - s > s-2$$
or,
$$ n>2s -2.$$

Note that if this inequality holds then the non-emptiness condition will 
also hold.

Finally, we need to check that both components are connected. $A$ is connected
but $G-A$ need not be. But this last step is the easiest. We pick a component 
in $G-A$ that has more vertices than the number of cut-edges adjacent to it.
Since the number of cut-edges is strictly smaller than number of vertices in
$G-A$, there must be one such component, say $B$, in $G-A$. We add every other
component of $G-A$ to $A$. Note that the size of the cut only decreases with
this step. Since $B$ is connected and has more vertices than the number of
cut-edges, $B$ cannot be a cycle.
\end{proof}

\begin{corollary}\label{exisMcut}
Every connected cubic graph on $n \ge 18$ vertices contains a suitable $M$-cut.
\end{corollary}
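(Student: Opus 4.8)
The plan is simply to chain the three preceding lemmas. Let $G$ be a connected cubic graph on $n\ge 18$ vertices and let $g$ be its girth. Lemma~\ref{girth} gives $g\le 2\lceil\log(\tfrac n3+1)\rceil$; Lemma~\ref{supercycle} then produces a supercycle in $G$ on at most $s:=2\lceil\log(\tfrac{n-1}{g})\rceil+g-1$ vertices; and Lemma~\ref{Mcut} converts this supercycle into a suitable $M$-cut as soon as $n>2s-2$. So the whole corollary reduces to the numerical inequality
\[
2\Big\lceil\log\tfrac{n-1}{g}\Big\rceil+g-1<\tfrac{n+2}{2}
\]
holding for every $n\ge 18$ and every integer $g$ with $3\le g\le 2\lceil\log(\tfrac n3+1)\rceil$.

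To verify this, I would first observe that the left-hand side, as a function of the integer $g$, is essentially non-decreasing over the (logarithmically short) admissible range: raising $g$ by one increases the term $g-1$ by $1$ and decreases $2\lceil\log\tfrac{n-1}{g}\rceil$ by at most $2$, and across any two consecutive values of $g$ the net change is non-negative (since for $g\ge 3$ one has $\log\frac{g}{g+2}>-1$). Hence it suffices to check the inequality for $g$ equal to, or just below, its upper bound $g_{\max}=2\lceil\log(\tfrac n3+1)\rceil$. Substituting this value of $g$, bounding $\lceil x\rceil<x+1$, and using that $x\mapsto x-2\log x$ is increasing for $x\ge 3$, one gets a bound of the form $s<4\log n-2\log\log n+O(1)$, so $2s-2=O(\log n)$, which is below $n$ for all sufficiently large $n$.

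This leaves only a bounded initial range of $n$ (the first few dozen values), which I would handle by a direct finite check: for such $n$ the bound on $g$ forces $g$ into a short list of small values, and for each the quantity $s$ is an explicit small integer obviously satisfying $n>2s-2$ — for instance, at $n=18$ one has $g\le 6$, hence $s\le 9$ and $18>16$. Combining the asymptotic estimate with this finite verification establishes the displayed inequality for all $n\ge 18$, and the corollary follows from Lemma~\ref{Mcut}.

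The only real nuisance I anticipate is exactly this numerical estimate near the lower end $n\approx 18$, where the margin in $n>2s-2$ is genuinely tight (the supercycle bound $s$ nearly attains $n/2$): there one must track the ceilings honestly rather than replacing $\lceil\log(\cdot)\rceil$ by $\log(\cdot)+1$ throughout, and a short case analysis over the finitely many possible girths is the cleanest way to close the gap.
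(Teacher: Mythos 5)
Your proposal is correct and follows the paper's own proof essentially step for step: chain Lemmas \ref{girth}, \ref{supercycle} and \ref{Mcut}, reduce to the numerical inequality $n>2s-2$, verify it asymptotically using the monotonicity of $x-2\log x$ (the paper gets $n\ge 44$ this way), and dispose of the remaining finitely many values $18\le n\le 42$ by a finite check (which the paper delegates to a short Maple program in the appendix rather than doing by hand). Your additional observation that $s(g)$ is non-decreasing over steps of two in $g$, so that only $g_{\max}$ and $g_{\max}-1$ need checking, is a correct but inessential refinement of the same argument.
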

\begin{proof}
Using the first two lemmas, we have a supercycle with $s\le 2 \lceil \log (\frac{n+1}{g}) \rceil +g-1$ vertices where $3\le g\le 2\lceil \log ( \frac{n}{3} +1) \rceil $. Then using the last lemma, we have an $M$-cut with both partitions being a supercycle if $n>2s-2$. So all we need to check is that $n$ is indeed big enough.
Note that

$$s\le 2 \log (\frac{n+1}{g}) +g+1= 2 \log (n+1) + g-2\log g
\le 2 \log (n+1) + 2 \log ( \frac{n}{3} +1) -2\log (2 \log ( \frac{n}{3} +1))+1$$

where the last inequality follows from the fact that $x-2\log_e x$ is increasing for $x\ge 2/\log 2\approx 2.88$. So we can bound the right hand side from above by $4 \log (n+1) +1$. Now we need that 

$$n> 2(4 \log (n+1) +1)-2=8\log (n+1)$$

which holds if $n\ge 44$.

The statement can be checked for $18\le n\le 42$ with a code that can be found in the Appendix. It outputs for a given value of $n$, the $g$ for which $2s-2$ is maximum and this maximum value.
Based on the output we can see that for $n \ge 18$, this value is smaller.
\end{proof}

\subsection{Proof}

\begin{lemma}\label{cubicdr}
Let $G$ be a connected cubic graph with a suitable $M$-cut. 
Then, $G$ can be drawn with the four basic slopes.
\end{lemma}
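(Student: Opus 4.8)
The plan is to use the suitable $M$-cut to split $G$ into two supercycles $G_1$ and $G_2$ joined by a matching $M=\{e_1,\dots,e_k\}$, draw each supercycle separately using Theorem~\ref{pachetal}, and then glue the two drawings together along the matching edges. To apply Theorem~\ref{pachetal} to a supercycle $G_i$, first delete one edge from each ``theta'' or ``dumbbell'' so that $G_i$ minus that edge is a connected subcubic graph that is not a cycle and has at least one vertex of degree $\le 2$ — in fact the endpoints of the deleted matching edges $e_1,\dots,e_k$ become degree-$2$ vertices, and these are exactly the vertices we must control. The idea is to run Theorem~\ref{pachetal} on $G_i$ with the $x$-coordinates of the cut-vertices prescribed (a rationally independent sequence), obtaining a drawing in which, by conclusion (3), no vertex lies to the North of any cut-vertex, so every cut-vertex is ``visible from above'' (nothing blocks a vertical ray going up from it).

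Next I would arrange the two drawings so that $G_1$ sits below $G_2$, reflected appropriately, with the cut-vertices of $G_1$ near the top of its drawing and the cut-vertices of $G_2$ near the bottom of its. The key point: if the matched pair for $e_j$ consists of $u_j\in G_1$ and $w_j\in G_2$, I want to connect $u_j$ to $w_j$ by a single segment of one of the four basic slopes. By choosing the prescribed $x$-coordinates of $u_j$ in $G_1$'s drawing and of $w_j$ in $G_2$'s drawing to agree (so both equal $x_j$), and by translating $G_2$ far enough up, the edge $u_jw_j$ becomes vertical; conclusion (3) for $G_1$ guarantees nothing of $G_1$ is above $u_j$, and reflecting $G_2$ vertically so that its cut-vertices are at the bottom with nothing of $G_2$ below them guarantees the segment hits no other vertex. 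Since the $x_j$ are rationally independent and (by the strengthened version of Theorem~\ref{pachetal} discussed in the excerpt) all other vertex $x$-coordinates are rational combinations of these, one can further perturb/scale to ensure the two sub-drawings occupy disjoint horizontal strips except exactly where the vertical matching edges pass, so no spurious crossings or incidences occur.

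The main obstacle I expect is the gluing step — specifically, making the matching edges usable. The subtlety is that Theorem~\ref{pachetal} only promises degree-$\le 2$ vertices have nothing to their North (condition (3)), and nothing to the North or Northwest for degree-$1$ vertices (condition (4)); so if I delete an edge of the supercycle to create the needed low-degree vertex, I have some freedom about which vertices end up degree $1$ versus degree $2$, and I must make sure the $k$ cut-vertices $u_1,\dots,u_k$ all end up with degree $2$ (not $1$) so the vertical-edge argument applies uniformly, while still leaving a legitimate degree-$\le 2$ vertex for the theorem's hypothesis. A supercycle has at least two vertices of degree $3$, hence after removing the cut matching it can still contain cycles; deleting one edge per ``theta''/``dumbbell'' block reduces it to a tree-of-cycles-free subcubic graph, and one must check this deletion can always be done so that it neither disconnects $G_i$ nor turns it into a single cycle, and that it creates a degree-$\le 2$ vertex disjoint from the cut-vertices (or coinciding harmlessly with one). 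Handling the small-girth ``dumbbell'' and multi-edge cases, and the bookkeeping to keep the two strips from colliding, is where the real care is needed; the rest is routine placement and scaling.
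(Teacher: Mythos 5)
Your overall strategy is exactly the paper's: apply Theorem~\ref{pachetal} to each of the two components with the cut-vertices' $x$-coordinates prescribed, flip one drawing (your vertical reflection versus the paper's rotation by $\pi$ is immaterial, since both preserve the four basic slopes and both convert ``nothing to the North'' into ``nothing to the South'' for the upper component), place it far above, and join matched cut-vertices by vertical segments, which are clean by condition~(3). That part of your proposal is correct and is essentially the entire published proof.

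The genuine flaw is your preliminary step of deleting one edge from each ``theta''/``dumbbell'' of the supercycle components before invoking Theorem~\ref{pachetal}. First, it is unnecessary: a component of a suitable $M$-cut is by definition a supercycle, hence connected, not a cycle (some vertex has degree $3$), of maximum degree $3$, and it already has vertices of degree exactly $2$ --- namely the endpoints of the cut-edges, each of which loses exactly one edge because the cut is a matching. So the hypotheses of Theorem~\ref{pachetal} hold verbatim, the degree-$2$ vertices are precisely the cut-vertices, and there are no degree-$1$ vertices to worry about. Second, and more seriously, the deletion creates a real gap: the edges you remove from the supercycles are never put back, and nothing in your argument shows they could be drawn with one of the four basic slopes afterwards (Theorem~\ref{pachetal} gives you no control over the relative position of two degree-$2$ or degree-$1$ vertices beyond their $x$-coordinates and the North/Northwest conditions, so a deleted internal edge generally cannot be reinserted). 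All of the difficulties you flag at the end --- keeping $G_i$ connected and non-cyclic after the deletion, ensuring the right vertices end up with degree $2$ rather than $1$ --- evaporate once you drop this step; with it, the proof as written does not close.
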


\begin{figure*}[h]
{\centering
\begin{tikzpicture}[scale=0.55]
\node [fill=black,circle,inner sep=1pt] (1) at (1,1) {}; 
\node [fill=black,circle,inner sep=1pt] (2) at (2,2) {}; 
\node [fill=black,circle,inner sep=1pt] (3) at (5,1) {}; 
\node [fill=black,circle,inner sep=1pt] (4) at (6,2) {}; 
\node [fill=black,circle,inner sep=1pt] (5) at (7,1.5) {}; 
\node [fill=black,circle,inner sep=1pt] (6) at (11,1) {}; 
\node [fill=black,circle,inner sep=1pt] (7) at (12,2) {}; 
\node [fill=black,circle,inner sep=1pt] (8) at (13,2) {}; 
\node [fill=black,circle,inner sep=1pt] (9) at (16,1) {}; 
\node [fill=black,circle,inner sep=1pt] (10) at (17,2) {}; 
\node [fill=black,circle,inner sep=1pt] (11) at (11,9) {}; 
\node [fill=black,circle,inner sep=1pt] (12) at (12,8) {}; 
\node [fill=black,circle,inner sep=1pt] (13) at (13,9) {}; 
\node [fill=black,circle,inner sep=1pt] (14) at (16,8) {}; 
\node [fill=black,circle,inner sep=1pt] (15) at (17,8.5) {}; 

\draw [black] (8,1.35) arc (0:360:4cm and 1.2cm);
\draw [black] (18.2,1.35) arc (0:360:4cm and 1.2cm);
\draw [black] (18.2,8.5) arc (0:360:4cm and 1.2cm);

\draw [dashed,black] (11,-1.9) node[below] {$x_1$} -- (6);
\draw [dashed,black] (12,-1) node[below] {$x_2$} -- (7);
\draw [dashed,black] (13,-1.9) node[below] {$x_3$} -- (8);
\draw [dashed,black] (16,-1) node[below] {$x_{m-1}$} -- (9);
\draw [dashed,black] (17,-1.9) node[below] {$x_m$} -- (10);
\draw [dashed,black] (1,-1.9) node[below] {$-x_m$} -- (1);
\draw [dashed,black] (2,-1) node[below] {$-x_{m-1}$} -- (2);
\draw [dashed,black] (5,-1.9) node[below] {$-x_3$} -- (3);
\draw [dashed,black] (6,-1) node[below] {$-x_2$} -- (4);
\draw [dashed,black] (7,-1.9) node[below] {$-x_1$} -- (5);
\draw [black] (6) -- (11);
\draw [black] (7) -- (12);
\draw [black] (8) -- (13);
\draw [black] (9) -- (14);
\draw [black] (10) -- (15);

\draw [very thick,double,->] (4,3) node[above=3cm] {Rotated and translated} arc (180:90:5cm);

\end{tikzpicture}
\caption{The $x$-coordinates of the degree $2$ vertices is suitably chosen and one component is rotated and translated to make the $M$-cut vertical.}
} 
\label{fig:final}
\end{figure*}
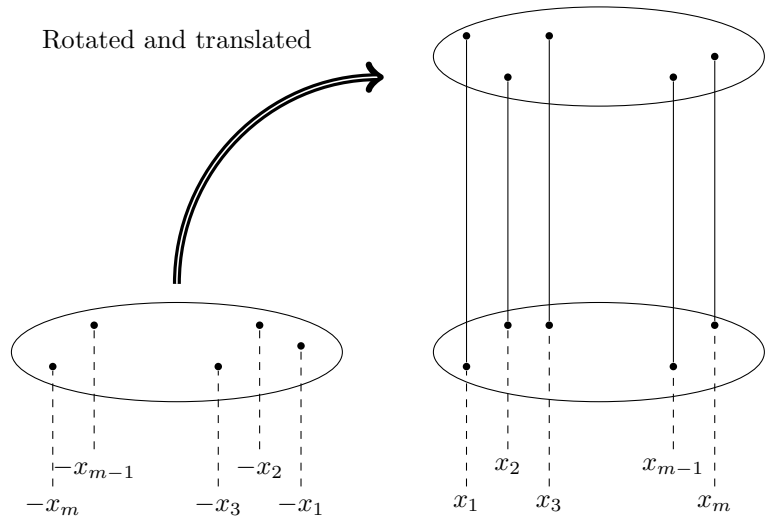

\begin{proof}
The proof follows rather straightforwardly from \ref{pachetal}. 
Note that the two components are subcubic graphs and we can choose 
the $x$-coordinates of the vertices of the $M$-cut (since they are 
the vertices with degree two in the components). If we picked coordinates 
$x_1, x_2, \ldots, x_m$ in one component, then for the neighbors of these vertices in the other component we pick the $x$-coordinates $-x_1,-x_{2},\ldots, -x_m$. We now rotate the second
component by $\pi$ and place it very high above the other component so that the drawings 
of the components do not intersect
and align them so that the edges of the $M$-cut will be vertical (slope $\pi/2$). 
Also, since Theorem \ref{pachetal} guarantees that degree two vertices have 
no other vertices on the vertical line above them, hence the drawing
we obtain above is a valid representation of $G$ with the basic slopes.
\end{proof}

From combining Lemma \ref{exisMcut} and Lemma \ref{cubicdr}, we can see that Theorem \ref{mainthm} 
is true for all cubic graphs with $n\ge 18$. For smaller graphs, we give 
below some lemmas which help reduce the number of graphs we have to check.
The lemmas below also occur in different papers and we give
references where required.

\begin{lemma}\label{bridge}
A connected cubic graph with a cut vertex can be drawn with the four basic slopes.
\end{lemma}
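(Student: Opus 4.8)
The plan is to reduce the cut-vertex hypothesis to the presence of a \emph{bridge} and then apply Lemma~\ref{cubicdr}. So first I would locate a bridge: if $v$ is a cut vertex of the connected cubic graph $G$, the three edges incident to $v$ are distributed among the components of $G-v$, each component receiving at least one of them. Since $G-v$ has at least two components while $v$ has only three incident edges, some component $C$ of $G-v$ must be joined to $v$ by exactly one edge $e$ — otherwise every component would absorb at least two edges and we would need at least four. Consequently $e$ is a bridge of $G$.

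Next I would show that the two sides of $e$ are supercycles. Deleting $e$ splits $G$ into $G_2 = G[V(C)]$ and $G_1 = G - V(C)$, both of which are connected ($G_2$ is the component $C$, and $G_1$ is $v$ together with the other components of $G-v$, all hanging off $v$). As $e$ is the only edge between $C$ and its complement, removing it drops the degree of exactly one vertex on each side — an endpoint of $e$ — from $3$ to $2$ and leaves every other degree equal to $3$. Thus each $G_i$ has minimum degree $2$; moreover the two neighbours of its unique degree-$2$ vertex still have degree $3$, so $G_i$ contains a degree-$3$ vertex and is not a cycle. Hence each $G_i$ is a supercycle and $\{V(G_1),V(G_2)\}$ is a suitable $M$-cut of $G$ (of size one, and a single edge is trivially a matching).

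Now Lemma~\ref{cubicdr} applies directly and gives the desired drawing; equivalently one can just redo its short proof for this special case — draw $G_1$ and $G_2$ by Theorem~\ref{pachetal}, choosing the $x$-coordinates of the two degree-$2$ endpoints to agree, rotate $G_2$ by $\pi$, stack it high above $G_1$ so the endpoints align, and realise $e$ as the connecting vertical segment, which by property~(3) of Theorem~\ref{pachetal} passes through no other vertex. The only step that needs genuine care is the middle one, ruling out a degenerate side with no degree-$3$ vertex so that ``supercycle'' really does apply; the rest follows immediately once the bridge is in hand.
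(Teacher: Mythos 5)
Your proposal is correct and follows the same route as the paper: extract a bridge from the cut vertex, observe that the resulting one-edge cut is a suitable $M$-cut (both sides connected, minimum degree two, not cycles), and invoke Lemma~\ref{cubicdr}. You simply spell out the details that the paper's one-line proof leaves implicit.
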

\begin{proof}
We observe that if the cubic graph has a cut vertex then it must also 
have a bridge. This bridge would be the suitable $M$-cut for using the
previous Lemma \ref{cubicdr}, since neither of the components can be disconnected
or cycles.
\end{proof}

\begin{lemma}\label{twodiscset}
A connected cubic graph with a two vertex disconnecting set can be drawn
with the four basic slopes.
\end{lemma}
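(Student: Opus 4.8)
The plan is to reduce the lemma to the drawing mechanism already developed in Lemma~\ref{cubicdr}: exhibit in $G$ a cut whose edges form a matching and whose two sides are connected subcubic graphs that are not cycles, and then apply verbatim the recipe of Lemma~\ref{cubicdr} — draw both sides by Theorem~\ref{pachetal}, give the interface vertices on the two sides opposite $x$-coordinates, rotate one side by $\pi$, and stack the drawings so that the matching edges become vertical.

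First I would make the standard reduction: by Lemma~\ref{bridge} we may assume $G$ is $2$-connected, so a disconnecting pair $\{u,v\}$ splits $G-\{u,v\}$ into components $C_1,\dots,C_k$ and neither $u$ nor $v$ is a cut vertex, hence each of $u,v$ has a neighbour in every $C_i$. Counting the three edges at $u$ (and at $v$) this forces $k\le 3$, forces $uv\notin E(G)$ when $k=3$, and shows that the number of edges joining $\{u,v\}$ to a fixed component is $2$, $3$ or $4$, and equals $2$ on both sides whenever $uv\in E(G)$.

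Next, for a component $C$ with $\ell$ edges to $\{u,v\}$, I would set $X:=C\cup\{\,w\in\{u,v\}: w\text{ has two neighbours in }C\,\}$ and $Y:=V(G)\setminus X$. A case check ($\ell\in\{2,3,4\}$) shows that the cut $(X,Y)$ always has exactly two edges, that these two edges are incident to four distinct vertices — hence form a matching — \emph{unless} $\ell=2$ and $u,v$ have a common neighbour in $C$, and that both $G[X]$ and $G[Y]$ are connected (the vertices added to $C$ are glued to it, and any remaining components are joined through $u$), contain a vertex of degree $\le 2$ (an endpoint of a cut edge), and are not cycles, since each side still contains a vertex of degree $3$ (here we use that no component is too small). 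Applying Theorem~\ref{pachetal} to both sides and then stacking exactly as in the proof of Lemma~\ref{cubicdr} finishes these cases.

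The remaining work — and the real obstacle — is the degenerate configurations the generic reduction does not cover: (i) $u$ and $v$ share a neighbour in every component, so no choice of $C$ yields a matching cut; and (ii) a side of the cut collapses to a cycle, which can only happen when some $C$ is a triangle, a $4$-cycle, or a doubled edge. For (i) one notes that if $c$ is a common neighbour of $u$ and $v$ then $\{u,c\}$ (or $\{v,c\}$) is again a disconnecting pair, with $c$ of degree $3$ using two of its edges inside $\{u,v\}$, so one may re-run the argument with this new pair; here one must either point to a quantity (such as the size of the smallest side) that strictly decreases, guaranteeing termination, or simply observe that such configurations pin $G$ down to a short explicit list. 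Since this lemma is only ever invoked for graphs on fewer than $18$ vertices — Corollary~\ref{exisMcut} together with Lemma~\ref{cubicdr} already settles everything larger — both (i) and (ii) concern only finitely many graphs, which can be disposed of by direct inspection or by the same kind of computer check used in Corollary~\ref{exisMcut}.
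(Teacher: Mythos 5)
Your overall strategy is the paper's: reduce to a size-two cut whose edges form a matching and whose two sides are supercycles, then invoke Lemma~\ref{cubicdr}. The generic construction you describe is correct and considerably more detailed than the paper's one-line argument. The problem is the ending. The two ``degenerate configurations'' you set aside are not actual obstacles, and your proposed escape --- ``the lemma is only ever invoked for $n<18$, so only finitely many graphs remain, and they can be checked'' --- neither proves the lemma as stated (it is a claim about \emph{all} connected cubic graphs with a two-vertex disconnecting set, not just those below the threshold of Corollary~\ref{exisMcut}) nor actually performs the check. So as written the argument is incomplete exactly where you locate the ``real obstacle.''

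Both degenerate cases evaporate once you use the reduction you already made via Lemma~\ref{bridge} to graphs with no cut vertex. For (i): if $\ell=2$ and the unique neighbours of $u$ and of $v$ in $C$ coincide at a vertex $c$, then the only edges leaving $C$ are $uc$ and $vc$, so every path from $C\setminus\{c\}$ to the rest of $G$ passes through $c$; hence $c$ is a cut vertex unless $C=\{c\}$, which is impossible since $c$ would then need three distinct neighbours inside $\{u,v\}$. (The $\ell=4$ variant, where the third edges of $u$ and $v$ meet at a common vertex of the other component, is the same configuration seen from the other side.) There is no iteration whose termination needs justifying. For (ii): a side of a size-two matching cut has exactly two vertices of degree two and all other vertices of degree three, so it is a cycle only if it has exactly two vertices, which in a simple cubic graph would force a double edge between them; hence each side is automatically a supercycle, and no finite list of exceptional graphs ever arises. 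This is precisely what the paper's parenthetical ``(or the graph has a bridge)'' is compressing. With those two observations inserted in place of the deferred finite check, your proof is complete and matches the paper's.
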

\begin{proof}
If a cubic graph has a two vertex disconnecting set, then it must have a 
cut of size two with non-adjacent edges. Again the two components
we obtain must be connected (or the graph has a bridge) and cannot be cycles.
Thus we can apply Lemma \ref{cubicdr} again to get the required drawing.
\end{proof}

The following theorem was proved by Max Engelstein \cite{eng}.

\begin{lemma}\label{maxeng}
Every $3$-connected cubic graph with a Hamiltonian cycle can be drawn 
in the plane with the four basic slopes.
\end{lemma}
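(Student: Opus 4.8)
The plan is to use Hamiltonicity to pin down the structure of $G$: writing $H=v_1v_2\cdots v_nv_1$ for the Hamiltonian cycle, $n$ is even and $M:=E(G)\setminus E(H)$ is a perfect matching, which we regard as a chord diagram drawn on $H$. I would draw $H$ as a monotone ``zig-zag staircase'': place the vertices on two horizontal lines $y=0$ and $y=1$ with strictly increasing $x$-coordinates along $H$, so that each edge of $H$ either joins two horizontally consecutive vertices on the same line (slope $0$) or crosses between the two lines (slope $\pi/4$ or $3\pi/4$), and close off the two ends of $H$ by a short diagonal or vertical detour. A matching edge $v_iv_j$ then becomes a chord of this staircase: if its endpoints lie on opposite lines we can realize it vertically (slope $\pi/2$) after forcing $x(v_i)=x(v_j)$, or diagonally; so the only chords we have to worry about are those joining two vertices on the same horizontal line.

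The cleanest case is when $G$ is bipartite: put one colour class on $y=0$ and the other on $y=1$, so that $H$ is a pure zig-zag with all cycle edges of slope $\pm\pi/4$ and every matching edge vertical, and we are essentially done after choosing the $x$-coordinates. To guarantee that these vertical matching edges pass through no other vertex and that no two segments overlap, I would pick the $x$-coordinates of the vertices on $y=0$ to be linearly independent over the rationals and define $x$ on $y=1$ by copying them across $M$ --- exactly the genericity device used in Theorem~\ref{pachetal} --- so that the only coincidences among $x$-coordinates are the ones we intend. In the non-bipartite case (for instance $K_4$) no $2$-colouring of the vertices makes $M$ bichromatic, so some matching edges are forced to join two vertices of the same horizontal line, and a straight segment of an allowed slope between such a pair necessarily runs along that line and collides with the rest of the drawing.

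Resolving these ``bad'' chords is, I expect, the main obstacle, and it is where $3$-connectivity should enter. The idea is local surgery: near a bad chord, replace the offending stretch of a horizontal line by a finer staircase (still built from slopes $0$ and $\pi/2$) that lifts one endpoint of the chord off the line so that the chord can be redrawn with slope $\pi/4$ or $3\pi/4$, then check that this modification does not make a previously good chord bad and that the whole figure can be rescaled back to use only the four slopes. Here is where I would expect $3$-connectivity to help: it restricts how the chords of $M$ can interleave --- there are no trivial nested chords and no small separating structures --- so the bad chords form a limited, well-behaved family and the surgeries can be carried out one family at a time. If this direct construction becomes unwieldy, the fallback is induction by ``smoothing'': delete one matching edge $v_av_b$ and suppress the two resulting degree-two vertices (this keeps $G$ cubic and Hamiltonian, up to easy degenerate cases), draw the smaller graph, and then reinsert $v_a$ and $v_b$ as subdivision points of two cycle edges, choosing their positions along those edges so that $v_av_b$ receives an allowed slope; the delicate point there is to keep enough control, through the induction, over which of the four slopes the cycle edges use so that such a reinsertion is always possible.
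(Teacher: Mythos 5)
This lemma is not proved in the paper at all: it is quoted as a result of Engelstein \cite{eng}, so there is no internal proof to compare yours against, and your attempt has to stand on its own. It does not. The decisive difficulty --- matching chords that cannot be drawn compatibly with the zig-zag layout of the Hamiltonian cycle --- is ``resolved'' only by an unspecified ``local surgery'' whose existence, termination, and non-interference with previously placed chords are hoped for rather than established; and the one place where the hypothesis of $3$-connectivity must do actual work is exactly this step, yet you never state a concrete structural consequence of $3$-connectivity (which chord configurations it excludes and why the remaining ones admit your surgery). The fallback induction by smoothing is equally unfinished: smoothing a matching edge can create multiple edges, the smaller graph need not remain $3$-connected or simple, and the ``control over which slopes the cycle edges use'' needed for reinsertion is precisely the inductive invariant you would have to formulate and prove.

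Moreover, the case you call clean is arithmetically impossible as described. If every Hamiltonian-cycle edge has slope $\pm\pi/4$ between the two fixed lines $y=0$ and $y=1$, then consecutive vertices satisfy $x(v_{i+1})=x(v_i)\pm 1$, so all $x$-coordinates are integers up to a common translation; in particular they cannot be chosen linearly independent over the rationals, so the genericity device of Theorem \ref{pachetal} is unavailable here. Worse, a matching edge of a bipartite graph joins $v_i$ to $v_j$ with $j-i$ odd, so its $x$-difference is a sum of an odd number of terms $\pm 1$, hence odd and never zero: no matching edge can be made vertical in the pure two-line zig-zag. This is visible already for $K_{3,3}$, which the paper accordingly draws as a hexagon rather than on two horizontal lines. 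So even the bipartite case requires a genuinely different construction, and the non-bipartite case, which you correctly identify as the heart of the matter, is left without an argument.
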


Note that combining the last three lemmas, 
we even get

\begin{corollary} 
Every cubic graph with a Hamiltonian cycle can be drawn 
in the plane with the four basic slopes.
\end{corollary}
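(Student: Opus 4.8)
The plan is a short case analysis on the vertex connectivity of the given graph, routing each case into one of the three preceding lemmas. First I would note that a cubic graph carrying a Hamiltonian cycle is automatically connected, since the cycle already spans every vertex; so it suffices to treat a connected cubic graph $G$ that possesses a Hamiltonian cycle.

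Next I would split according to the vertex connectivity $\kappa(G)$. If $G$ has a cut vertex (the case $\kappa(G)=1$), then Lemma \ref{bridge} already produces a drawing with the four basic slopes, without even invoking Hamiltonicity. If $G$ has a two-vertex disconnecting set (the case $\kappa(G)=2$), then Lemma \ref{twodiscset} applies, again using only cubicity. In the remaining case $\kappa(G)\ge 3$; since $G$ is cubic its connectivity cannot exceed $3$, so $G$ is exactly $3$-connected, and now the hypothesis that $G$ has a Hamiltonian cycle is precisely what lets us invoke Lemma \ref{maxeng}. Since these three cases are exhaustive for connected cubic graphs, the corollary follows by combining the three lemmas.

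The only point that requires any care — and it is the ``main obstacle'' only in a very mild sense — is confirming that the three cases really do cover all connected cubic graphs with a Hamiltonian cycle: a connected cubic graph that has neither a cut vertex nor a separating pair of vertices must be $3$-connected, and there is no fourth case because a cubic graph has connectivity at most $3$. It is worth observing in passing that the full strength of the argument in the low-connectivity cases is inherited from the existence of a suitable $M$-cut (Lemma \ref{cubicdr}), so that the Hamiltonicity assumption is genuinely needed only to dispose of the $3$-connected case via Engelstein's lemma.
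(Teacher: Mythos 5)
Your proposal is correct and matches the paper's proof, which likewise obtains the corollary by combining Lemma \ref{bridge}, Lemma \ref{twodiscset}, and Lemma \ref{maxeng} according to whether the (necessarily connected) graph is $1$-, $2$-, or $3$-connected. Your explicit remark that Hamiltonicity forces connectivity and that cubic graphs have connectivity at most $3$ just spells out what the paper leaves implicit.
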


The graphs which now need to be checked satisfy the following conditions:
\begin{enumerate}
\item the number of vertices is at most $16$
\item the graph is $3$-connected
\item the graph does not have a Hamiltonian cycle.
\end{enumerate}

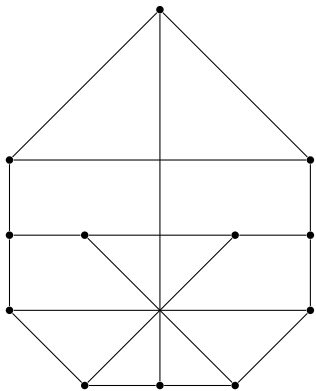
\begin{figure}[htp]\label{fig:tietze}
{\centering
\begin{tikzpicture}[scale=1]
\node [fill=black,circle,inner sep=1pt] (1) at (0,1) {}; 
\node [fill=black,circle,inner sep=1pt] (2) at (0,2) {}; 
\node [fill=black,circle,inner sep=1pt] (3) at (1,2) {}; 
\node [fill=black,circle,inner sep=1pt] (4) at (3,2) {}; 
\node [fill=black,circle,inner sep=1pt] (5) at (4,2) {}; 
\node [fill=black,circle,inner sep=1pt] (6) at (4,1) {}; 
\node [fill=black,circle,inner sep=1pt] (7) at (3,0) {}; 
\node [fill=black,circle,inner sep=1pt] (8) at (2,0) {}; 
\node [fill=black,circle,inner sep=1pt] (9) at (1,0) {}; 
\node [fill=black,circle,inner sep=1pt] (10) at (0,3) {}; 
\node [fill=black,circle,inner sep=1pt] (11) at (4,3) {}; 
\node [fill=black,circle,inner sep=1pt] (12) at (2,5) {}; 
\draw [black] (1) -- (2) -- (3) -- (4) -- (5) -- (6) -- (1);
\draw [black] (1) -- (9) -- (8) -- (7) -- (6);
\draw [black] (10) -- (11) -- (12) -- (10);
\draw [black] (9) -- (4);
\draw [black] (7) -- (3);
\draw [black] (2) -- (10);
\draw [black] (8) -- (12);
\draw [black] (5) -- (11);
\end{tikzpicture}
\caption{The Tietze's graph drawn with the four basic slopes.}
} 
\end{figure}

Note that if the number of vertices is at most $16$, then it follows from Lemma \ref{girth} that the girth is at most $6$.
Luckily there are several lists available of cubic graphs with a given number of vertices, $n$ and a given girth, $g$.

If $g=6$, then there are only two graphs with at most $16$ vertices (see \cite{graphlist, meringer}), both containing a Hamiltonian cycle.

If $g=5$ and $n=16$, then Lemma \ref{supercycle} gives a supercycle with at most $8$ vertices, so using Lemma \ref{Mcut} we are done.

If $g=5$ and $n=14$, then there are only nine graphs (see \cite{graphlist, meringer}), all containing a Hamiltonian cycle.

If $g\le 4$ and $n=16$, then Lemma \ref{supercycle} gives a supercycle with at most $8$ vertices, so using Lemma \ref{Mcut} we are done.

If $g\le 4$ and $n=14$, then Lemma \ref{supercycle} gives a supercycle with at most $7$ vertices, so using Lemma \ref{Mcut} we are done.

Finally, all graphs with at most $12$ vertices are either not $3$-connected or contain a Hamiltonian cycle, except for the Petersen graph and Tietze's Graph (see \cite{wiki}).
For the drawing of these two graphs, see the respective Figures.

\section{Which four slopes? and other concluding questions}
After establishing Theorem \ref{mainthm} the question arises whether we could have used any other four slopes.
Call a set of slopes {\em good} if every cubic graph has a straight-line drawing with them. 
In this section we prove Theorem \ref{karakterizacio} that claims that the following statements are equivalent for a set $S$ of four slopes.

\begin{enumerate}
\item $S$ is good.
\item $S$ is an affine image of the four basic slopes.
\item We can draw $K_4$ with $S$.
\end{enumerate}

\begin{proof}
Since affine transformation keeps incidences, any set that is the affine image of the four basic slopes is good.

On the other hand, if a set $S=\{s_1,s_2,s_3,s_4\}$ is good, then $K_4$ has a straight-line drawing with $S$. Since we do not allow a vertex to be in the interior of an edge, the four vertices must be in general position. This implies that two incident edges cannot have the same slope. Therefore there are two slopes, without loss of generality $s_1$ and $s_2$, such that we have two-two edges of each slope. These four edges must form a cycle of length four, which means that the vertices are the vertices of a parallelogram. But in this case there is an affine transformation that takes the parallelogram to a square. This transformation also takes $S$ into the four basic slopes.
\end{proof}

Note that a similar reasoning shows that no matter how many slopes we take, their set need not be good, because we cannot even draw $K_4$ with them unless they satisfy some correlation. 
As in the proofs it is used only a few times that our slopes are the four basic slopes (for rotation invariance and to start induction), we make the following conjecture.

\begin{conjecture}
There is a (not necessarily connected, finite) graph such that a set of slopes is good if and only if this graph has a straight-line drawing with them.
\end{conjecture}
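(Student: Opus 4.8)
The plan is to prove the cycle of implications $(2)\Rightarrow(1)\Rightarrow(3)\Rightarrow(2)$; the first two implications are soft, and the third one carries all of the geometry.

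For $(2)\Rightarrow(1)$ I would use that a nondegenerate affine map $\phi$ of the plane is a bijection that carries segments to segments, preserves and reflects incidences (so it neither creates nor destroys a vertex lying in the interior of a non-incident edge), and induces a bijection on the set of directions; in particular it carries a set of four distinct slopes to a set of four distinct slopes. Hence if $S=\phi(\{0,\pi/4,\pi/2,3\pi/4\})$, then for any cubic graph $G$ I take the drawing of $G$ with the four basic slopes guaranteed by Theorem~\ref{mainthm} and apply $\phi$: the image is a legitimate straight-line drawing of $G$ all of whose edges have slopes in $S$, so $S$ is good. The implication $(1)\Rightarrow(3)$ needs no work, since $K_4$ is itself a cubic graph.

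For $(3)\Rightarrow(2)$, suppose $K_4$ has been drawn so that each of its six edges has a slope in $S=\{s_1,s_2,s_3,s_4\}$. First I would note that the four vertices are in general position --- three collinear vertices would put the middle one in the interior of an edge --- so incident edges always get distinct slopes. The pairs of non-incident edges of $K_4$ are precisely its three perfect matchings (each of size two), and no three edges of $K_4$ are pairwise non-incident, so every slope is used by at most two edges, and any two edges of the same slope form one of these matchings. Distributing six edges over four slopes with multiplicity at most two, at least two slopes --- say $s_1$ and $s_2$ --- are each used by exactly two edges, i.e.\ on distinct matchings $M_1,M_2$. Since distinct perfect matchings of $K_4$ are disjoint, $M_1\cup M_2$ is a four-cycle through all four vertices in which the two pairs of opposite edges carry the distinct slopes $s_1$ and $s_2$; combined with general position this forces the quadrilateral to be a genuine parallelogram. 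Finally I would pick an affine map $\psi$ taking this parallelogram to the unit square: it sends $s_1,s_2$ to the vertical and horizontal slopes, and it sends the two edges of $K_4$ outside the four-cycle --- the diagonals of the parallelogram, whose slopes are the two remaining members $s_3,s_4$ of $S$ and which are distinct because the diagonals cross --- to the two diagonals of the square, hence to the slopes $\pi/4$ and $3\pi/4$. Thus $\psi(S)=\{0,\pi/4,\pi/2,3\pi/4\}$, so $S$ is an affine image of the four basic slopes, and the cycle of implications closes.

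The one genuinely delicate point is the counting at the start of $(3)\Rightarrow(2)$: one must see that a repeated slope is forced onto a perfect matching, that no slope can appear three times (there is no matching of size three in $K_4$), and hence that two whole matchings come out monochromatic --- this is precisely what upgrades the four-point configuration from an arbitrary quadrilateral to a parallelogram, after which the affine normalization, and with it the automatic identification of the two diagonal slopes with $\pi/4$ and $3\pi/4$, is essentially free. I would also take brief care to record that a nondegenerate affine map permutes directions bijectively, so that the phrase ``an affine image of the four basic slopes'' really does name a four-element set of slopes.
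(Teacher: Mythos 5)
You have written a correct and complete proof --- but of Theorem~\ref{karakterizacio}, not of the statement at hand. The statement you were asked about is the Conjecture, which the paper does not prove and explicitly leaves open; your argument is essentially identical to the paper's proof of the four-slope characterization (the cycle $(2)\Rightarrow(1)\Rightarrow(3)\Rightarrow(2)$, the observation that each slope class of a drawing of $K_4$ is a matching of size at most two, the forced parallelogram, and the affine normalization sending the two diagonals to the slopes $\pi/4$ and $3\pi/4$).

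The gap is that the Conjecture quantifies over sets of slopes of \emph{arbitrary} finite size, and asks for a single finite witness graph $H$ such that $S$ is good if and only if $H$ is drawable with $S$. Your argument establishes this only when $|S|=4$, with $H=K_4$. For $|S|\ge 5$ the key step breaks down: a drawing of $K_4$ with slopes from $S$ need not use only four of them arranged as a parallelogram plus diagonals. Indeed, four points in general position determine six pairwise distinct slopes, so \emph{every} generic six-element set $S$ admits a drawing of $K_4$, yet nothing in your argument (or in the paper) shows such an $S$ to be good --- the paper states it cannot even rule out or confirm that $K_4$ alone suffices as the witness. So the implication ``$H$ drawable with $S$ $\Rightarrow$ $S$ good'' is unproven for every candidate $H$ once $|S|>4$, and the Conjecture remains open; what you have proved is the (already established) restricted statement for four-element slope sets.
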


This finite graph would be the disjoint union of $K_4$, maybe the Petersen graph and other small graphs. We could not even rule out the possibility that $K_4$ (or maybe another, connected graph) is alone sufficient. Note that we can define a partial order on the graphs this way. Let $G < H$ if any set of slopes that can be used to draw $H$ can also be used to draw $G$. This way of course $G\subset H \Rightarrow G<H$ but what else can we say about this poset?

Is it possible to use this new method to prove that the slope parameter of cubic graphs is also four?

The main question remains to prove or disprove whether the slope number of graphs with maximum degree four is unbounded.

\bibliographystyle{plain}

\begin{thebibliography}{10}

\bibitem{graphlist}
http://www.mathe2.uni-bayreuth.de/markus/reggraphs.html.

\bibitem{wiki}
http://en.wikipedia.org/wiki/Table of simple cubic graphs.

\bibitem{amba06}
G.~Ambrus and J{\'a}nos Bar{\'a}t.
\newblock A contribution to queens graphs: A substitution method.
\newblock {\em Discrete Mathematics}, 306(12):1105--1114, 2006.

\bibitem{ambaha06}
G.~Ambrus, J{\'a}nos Bar{\'a}t, and P.~Hajnal.
\newblock The slope parameter of graphs.
\newblock {\em Acta Sci. Math. (Szeged)}, 72:875--889, 2006.

\bibitem{Angelini}
Patrizio Angelini, Luca Cittadini, Giuseppe {Di Battista}, Walter Didimo,
  Fabrizio Frati, Michael Kaufmann, and Antonios Symvonis.
\newblock On the perspectives opened by right angle crossing drawings.
\newblock In {\em 17th Sympos. Graph Drawing (GD'09)}, volume 5849 of {\em
  LNCS}, pages 21--32, 2010.

\bibitem{rado}
Karin Arikushi, Radoslav Fulek, Bal{\'a}zs Keszegh, Filip Moric, and Csaba~D.
  T{\'o}th.
\newblock Graphs that admit right angle crossing drawings.
\newblock In Dimitrios~M. Thilikos, editor, {\em WG}, volume 6410 of {\em
  Lecture Notes in Computer Science}, pages 135--146, 2010.

\bibitem{bmw06}
J{\'a}nos Bar{\'a}t, Jir\'{\i} Matousek, and David~R. Wood.
\newblock Bounded-degree graphs have arbitrarily large geometric thickness.
\newblock {\em Electr. J. Comb.}, 13(1), 2006.

\bibitem{bs09}
J.~Bell and B.~Stevens.
\newblock A survey of known results and research areas for $n$-queens.
\newblock {\em Discrete Mathematics}, 309:1--31, 2009.

\bibitem{Didimo}
Walter Didimo, Peter Eades, and Giuseppe Liotta.
\newblock Drawing graphs with right angle crossings.
\newblock In {\em Proc. 11th WADS}, volume 5664 of {\em LNCS}, pages 206--217.
  Springer, 2009.

\bibitem{DiEH}
Michael~B. Dillencourt, David Eppstein, and Daniel~S. Hirschberg.
\newblock Geometric thickness of complete graphs.
\newblock {\em J. Graph Algorithms Appl.}, 4(3):5--17, 2000.

\bibitem{dsw07}
Vida Dujmovic, David Eppstein, Matthew Suderman, and David~R. Wood.
\newblock Drawings of planar graphs with few slopes and segments.
\newblock {\em Comput. Geom.}, 38(3):194--212, 2007.

\bibitem{dsw04}
Vida Dujmovic, Matthew Suderman, and David~R. Wood.
\newblock Really straight graph drawings.
\newblock In J{\'a}nos Pach, editor, {\em Graph Drawing}, volume 3383 of {\em
  Lecture Notes in Computer Science}, pages 122--132. Springer, 2004.

\bibitem{DuW}
Vida Dujmovic and David~R. Wood.
\newblock Graph treewidth and geometric thickness parameters.
\newblock {\em Discrete {\&} Computational Geometry}, 37(4):641--670, 2007.

\bibitem{dek04}
Christian~A. Duncan, David Eppstein, and Stephen~G. Kobourov.
\newblock The geometric thickness of low degree graphs.
\newblock In Jack Snoeyink and Jean-Daniel Boissonnat, editors, {\em Symposium
  on Computational Geometry}, pages 340--346. ACM, 2004.

\bibitem{eng}
M.~Engelstein.
\newblock Drawing graphs with few slopes.
\newblock {\em Intel Competition for high school students}, 2005.

\bibitem{E04}
David Eppstein.
\newblock Separating thickness from geometric thickness.
\newblock In Stephen~G. Kobourov and Michael~T. Goodrich, editors, {\em Graph
  Drawing}, volume 2528 of {\em Lecture Notes in Computer Science}, pages
  150--161. Springer, 2002.

\bibitem{HuSW}
Joan~P. Hutchinson, Thomas~C. Shermer, and Andrew Vince.
\newblock On representations of some thickness-two graphs.
\newblock {\em Comput. Geom.}, 13(3):161--171, 1999.

\bibitem{JJ10}
V\'{\i}t Jel\'{\i}nek, Eva Jel\'{\i}nkov{\'a}, Jan Kratochv\'{\i}l, Bernard
  Lidick{\'y}, Marek Tesar, and Tom{\'a}s Vyskocil.
\newblock The planar slope number of planar partial 3-trees of bounded degree.
\newblock In David Eppstein and Emden~R. Gansner, editors, {\em Graph Drawing},
  volume 5849 of {\em Lecture Notes in Computer Science}, pages 304--315.
  Springer, 2009.

\bibitem{Ka}
P.~C. Kainen.
\newblock Thickness and coarseness of graphs.
\newblock {\em Abh. Math. Sem. Univ. Hamburg}, 39:88--95, 1973.

\bibitem{kpp10}
Bal{\'a}zs Keszegh, J{\'a}nos Pach, and D{\"o}m{\"o}t{\"o}r P{\'a}lv{\"o}lgyi.
\newblock Drawing planar graphs of bounded degree with few slopes.
\newblock In Ulrik Brandes and Sabine Cornelsen, editors, {\em Graph Drawing},
  volume 6502 of {\em Lecture Notes in Computer Science}, pages 293--304.
  Springer, 2010.

\bibitem{kppt08_2}
Bal{\'a}zs Keszegh, J{\'a}nos Pach, D{\"o}m{\"o}t{\"o}r P{\'a}lv{\"o}lgyi, and
  G{\'e}za T{\'o}th.
\newblock Drawing cubic graphs with at most five slopes.
\newblock {\em Comput. Geom.}, 40(2):138--147, 2008.

\bibitem{kppt10}
Bal{\'a}zs Keszegh, J{\'a}nos Pach, D{\"o}m{\"o}t{\"o}r P{\'a}lv{\"o}lgyi, and
  G{\'e}za T{\'o}th.
\newblock Cubic graphs have bounded slope parameter.
\newblock {\em J. Graph Algorithms Appl.}, 14(1):5--17, 2010.

\bibitem{meringer}
Markus Meringer.
\newblock Fast generation of regular graphs and construction of cages.
\newblock {\em J. Graph Theory}, 30:137--146, February 1999.

\bibitem{ms}
Padmini Mukkamala and Mario Szegedy.
\newblock Geometric representation of cubic graphs with four directions.
\newblock {\em Comput. Geom.}, 42(9):842--851, 2009.

\bibitem{MuOS}
Petra Mutzel, Thomas Odenthal, and Mark Scharbrodt.
\newblock The thickness of graphs: A survey.
\newblock {\em Graphs Combin}, 14:59--73, 1998.

\bibitem{pp06}
J{\'a}nos Pach and D{\"o}m{\"o}t{\"o}r P{\'a}lv{\"o}lgyi.
\newblock Bounded-degree graphs can have arbitrarily large slope numbers.
\newblock {\em Electr. J. Comb.}, 13(1), 2006.

\bibitem{wc94}
Greg~A. Wade and Jiang-Hsing Chu.
\newblock Drawability of complete graphs using a minimal slope set.
\newblock {\em Comput. J.}, 37(2):139--142, 1994.

\end{thebibliography}

\appendix
\section{Program code}
The following code is in Maple.

\begin{verbatim}
#For accessing log, ceil functions.
with(MTM);

#fmax is a procedure that computes the girth for which a graph on N
#vertices will have the largest supercyle.
#Here, mg denotes the maximum possible girth, max and g will have the
#values of the maximum size of the supercycle and the girth at which 
#it occurs respectively. The procedure returns 2s-2, if this value is
#less than N, we can apply Lemma 2.6 and 2.8 to draw the graphs on N
#vertices.
fmax := proc (N) local g, mg, max, i, exp; 

#Initializations
max := -1;
g := 0;
mg := 2*ceil(evalf(log2((1/3)*N+1))); 

if mg < 3 then RETURN([N, 2*max-2, mg, g]) fi; 

#Main search cycle.
for i from 3 while i <= mg do 
     exp := 2*ceil(evalf(log2((N+1)/i)))+i-1; 
     if max < exp then max := exp; g := i fi
end do; 

RETURN([N, 2*max-2, mg, g]) 
end proc;

seq(fmax(i), i = 6 .. 42, 2);
[6,10,4,3], [8,12,4,4], [10,14,6,5], [12,16,6,6], [14,16,6,6],
[16,16,6,4], [18,16,6,4], [20,18,6,5], [22,20,8,8], [24,20,8,6],
[26,20,8,6], [28,22,8,7], [30,22,8,7], [32,24,8,8], [34,24,8,8], 
[36,24,8,8], [38,24,8,8], [40,24,8,8], [42,24,8,8]
\end{verbatim}

\end{document}